\def\CC{\mathbb C}
\def\CO{\mathcal O}
\def\conv{\operatorname{conv}}
\def\phi{\varphi}
\def\Ree{\operatorname{Re}}
\def\RR{\mathbb R}
\def\NN{\mathbb N}
\def\PP{\mathbb P}
\def\DD{\mathbb D}
\def\eps{\varepsilon}
\def\too{\longrightarrow}
\def\Cal{\mathcal}
\def\wdht{\widehat}
\def\wdtl{\widetilde}
\def\rank{rank}
\def\th@mytheorem{%
  \let\thm@indent\noindent
  \thm@headfont{\bfseries}
    \itshape
}
\def\th@myremark{%
  \let\thm@indent\noindent
  \thm@headfont{\bfseries}
}
\theoremstyle{mytheorem}
\newtheorem{Theorem}{Theorem}
\newtheorem{nono-theorem}{Theorem}[]
\newtheorem*{Theorem*}{Theorem}
\theoremstyle{myremark}
\newtheorem{Remark}[Theorem]{Remark}
\newtheorem{Lemma}{Lemma}
\begin{document}

\title[A note on partial polynomial functions] {A note on partial polynomial functions,\\ in memory of Marek Jarnicki}

\author[P.~Pflug]{Peter Pflug}
\address{Carl von Ossietzky Universit\"at Oldenburg, Institut f\"ur Mathematik,
Postfach 2503, D-26111 Oldenburg, Germany}
\email{peter.pflug@uol.de}

\begin{abstract}
We present an extension theorem for a separately holomorphic function which is polynomial/rational in some variables.
\end{abstract}

\subjclass[2010]{32A08, 32D10, 32D15}


\maketitle

 Motivated by the recent paper \cite{Ivan2024} we discuss the following generalizations of their results, see Theorem \ref{eins} and Theorem \ref{zwei}. The main ideas are taken from \cite{Ivan2024} enriched by results on cross domains in \cite{JarPfl2011}.

 First, let us recall some notation and results from \cite{JarPfl2011}. Given domains $D\subset\CC^m$, $G\subset\CC^n$ and subsets $A\subset D$, $B\subset G$, both locally pluriregular in $D$, resp. $G$, then one defines the associated cross
 $$
 \bold X:=\bold X(A,D;B,G):=(A\times G)\cup(D\times B)
 $$
 and its envelope
 $$
 \wdht{\bold X}:=\{(z,w)\in D\times G:h^\ast_{A,D}(z)+h^\ast_{B,G}(w)<1\},
 $$
 where $h_{A,D}$ (resp. $h_{B,G}$) is the associated relative extremal function (for its definition see \cite{JarPfl2011}) and $h_{B,G}^\ast$ its upper semicontinuous regularization.

 Note that $\bold X$ and $\wdht{\bold X}$ are both connected and $\bold X\subset\wdht{\bold X}$ (since the sets $A$ and $B$ are assumed to be locally pluriregular).

 Moreover, a function $f:\bold X\too\CC$ is called to be {\it separately holomorphic} on the cross $\bold X$, if for all $a\in A$ (resp. for all $b\in B$) the function $f(a,\cdot)$ (resp. $f(\cdot,b)$) is holomorphic on $G$ (resp on $D$); we will shortly  write $f\in\CO_s(\bold X)$.

 Let us also recall the cross theorem (see Theorem 5.4.1 in \cite{JarPfl2011}), which is the basic result for the proofs in this note.
 \vskip 0.2cm

\begin{Theorem*} If $f\in\CO_s(\bold X)$, then there exists an $\wdht f\in \CO(\wdht{\bold X})$\footnote{For a domain $S\subset\CC^k$  the set of all holomorphic functions on $S$ is as usually denoted by $\CO(S)$.} with $\wdht f|_{\bold X}=f$.
\end{Theorem*}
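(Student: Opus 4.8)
The plan is to reduce to a bounded, relatively compact situation and then to build $\wdht f$ locally by a Hartogs--Siciak double-sequence argument driven by pluripotential estimates. First I would reduce to the case where $D$ and $G$ are bounded pseudoconvex, $A\Subset D$ and $B\Subset G$ are compact and locally pluriregular, and $f$ is bounded on $\bold X$. Boundedness is arranged as follows: for fixed $a\in A$ the slice $f(a,\cdot)$ is holomorphic on $G$, hence locally bounded, and since $A$ is non-pluripolar a standard category argument over $A$ yields local boundedness of $f$ on $\bold X$, so after passing to relatively compact subcrosses we may assume $f$ bounded by $1$. The pseudoconvexity and relative compactness are then arranged by exhausting $D,G$ from inside and $A,B$ by compact locally pluriregular subsets, observing that the envelopes of the subcrosses exhaust $\wdht{\bold X}$ and that, $A$ and $B$ being non-pluripolar, each holomorphic extension is unique (apply the identity theorem slicewise), so the extensions patch in the limit.

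The engine of the argument is the Bernstein--Walsh (two-constants) inequality: if $g\in\CO(D)$ with $\|g\|_D\le M$ and $\|g\|_A\le m$, then $|g(z)|\le m^{1-h^\ast_{A,D}(z)}M^{h^\ast_{A,D}(z)}$ on $D$, obtained by comparing the plurisubharmonic function $\log|g|$ with $(1-h^\ast_{A,D})\log m+h^\ast_{A,D}\log M$; symmetrically for $h^\ast_{B,G}$. Thus geometric smallness of a holomorphic function on $A$ propagates into $D$ with a rate governed by $h^\ast_{A,D}$, which is precisely what will combine the two directions of approximation exactly on the set $\{h^\ast_{A,D}+h^\ast_{B,G}<1\}$.

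For the construction, fix $(z_0,w_0)\in\wdht{\bold X}$, so $h^\ast_{A,D}(z_0)+h^\ast_{B,G}(w_0)<1$. Using the local pluriregularity of $B$ I would pick, via the Siciak extremal function and an extremal (Fekete- or Bergman-type) array for $B$, a linear degree-$k$ polynomial approximation scheme $T_k$ in the variable $w$ whose coefficients depend linearly on the values (or moments) of the function against points (or measures) supported in $B$, and such that $\|g-T_kg\|$ on a fixed compact neighbourhood of $w_0$ decays geometrically for $g\in\CO(G)$ with $\|g\|_G\le 1$, at a rate controlled by $h^\ast_{B,G}(w_0)$. Applying $T_k$ in $w$ to $f$: for $z\in A$ the approximant $T_kf(z,\cdot)$ is defined, and because its coefficients only involve $f(z,\cdot)|_B$ --- where $z\mapsto f(z,b)$ is holomorphic on all of $D$, which is where $D\times B\subset\bold X$ enters --- these coefficients $c_\alpha^{(k)}(z)$ extend holomorphically to $D$. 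They are bounded on $D$ (from $\|f\|_{D\times B}\le1$ and the subexponential growth of the scheme's Lebesgue-type constants), while geometric smallness of $T_{k+1}f-T_kf$ on $A\times(\text{nbhd of }w_0)$ gives, by Cauchy estimates in $w$ and then the Bernstein--Walsh inequality in $z$, a bound $|c_\alpha^{(k+1)}(z)-c_\alpha^{(k)}(z)|\le C\,\sigma^{\,k(1-h^\ast_{A,D}(z))}$ on $D$. Combining this with the $w$-rate near $w_0$ one checks that the telescoping series $T_1f+\sum_k(T_{k+1}f-T_kf)$ converges locally uniformly on a neighbourhood of $(z_0,w_0)$ inside $\wdht{\bold X}$, to a holomorphic function equal to $f$ on $\bold X$. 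Letting $(z_0,w_0)$ vary and invoking uniqueness, these local pieces glue to the required $\wdht f\in\CO(\wdht{\bold X})$.

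The main difficulty will be making the two geometric rates fit together so that convergence holds on exactly $\{h^\ast_{A,D}+h^\ast_{B,G}<1\}$ and not on a smaller set: this needs sharp Bernstein--Walsh--Siciak estimates relating polynomial-approximation rates of holomorphic functions to the relative extremal function --- valid for merely locally pluriregular $A,B$, which one handles by localization and the fact that $h^\ast_{A,D}$ is plurisubharmonic with $h^\ast_{A,D}|_A=0$ --- together with control of the auxiliary scheme, since in $\CC^n$ with $n>1$ naive Lagrange interpolation will not suffice and one must use extremal or Bergman systems with subexponential norm growth. Once the bounded, relatively compact case is established, retracing the reductions of the first step is routine.
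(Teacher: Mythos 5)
You should first note that the paper does not prove this statement at all: it is the cross theorem, quoted verbatim from Theorem 5.4.1 of [Jar-Pfl 2011] and used as a black box, so there is no internal proof to compare with. Your sketch follows the classical Zahariuta--Siciak approximation strategy that underlies the known proofs, so the family of ideas is right, but as it stands it has genuine gaps. The central one is that the approximation scheme $T_k$ is postulated rather than constructed, and it is postulated with the wrong extremal function: Fekete/Lagrange or Siciak-type polynomial schemes give geometric rates governed by the \emph{global} (Siciak) extremal function of $B$ in $\CC^n$, whereas the envelope $\wdht{\bold X}$ is cut out by the \emph{relative} (condenser) extremal function $h^\ast_{B,G}$; these differ as soon as $G\neq\CC^n$, and an argument based on polynomial approximation would produce convergence on a smaller set than $\{h^\ast_{A,D}+h^\ast_{B,G}<1\}$. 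Producing a linear scheme whose error decays like $\sigma^{k}$ with $\sigma$ tied exactly to $h^\ast_{B,G}$, with subexponentially growing norms of the coefficient functionals, is essentially Zahariuta's method of common orthogonal bases (or an equivalent Hilbert-space/functional-analytic construction) and requires pseudoconvexity and regularity arguments; it is the main content of the theorem, not an ingredient one may invoke.

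The second gap is the reduction to the bounded, relatively compact case and back. The Baire/Osgood-type argument gives boundedness of $f$ only on products $A'\times K$ where $A'\subset A$ is merely non-pluripolar (closed in $A$ with nonempty relative interior in $A$ at best), not locally pluriregular in a way that lets you quote the bounded case directly; handling this, and then recovering the full envelope from the envelopes of the shrunken crosses, needs $h^\ast_{A_j,D}\searrow h^\ast_{A,D}$ along the exhaustion together with a nontrivial propagation/gluing argument (this ``unbounded and non-compact'' step is itself a substantial theorem, due to Alehyane--Zeriahi and Jarnicki--Pflug, occupying a large part of their monograph). So ``retracing the reductions is routine'' is not accurate. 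In short: correct skeleton, but the two load-bearing steps --- the approximation engine adapted to the relative extremal function, and the passage from the classical compact bounded case to the general locally pluriregular unbounded case --- are exactly where the real work lies, and neither is supplied.
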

\vskip 0.2cm
In addition, I add two  observations we will use later:
\begin{Remark}\label{rem1}
$\alpha$) if $M$ is a locally pluriregular subset of the domain $S\subset\CC^k$ and $U\subset \CC^k$ open with $U\cap M\neq\varnothing$, then $U\cap M$ is also locally pluriregular (use just the definition);

$\beta$) if $M\subset S\subset\CC^k$ is locally pluriregular and $f\in\CO(S)$ with $f|_M=0$, then $f=0$ on $S$.\footnote{
Otherwise, $M\subset\{z\in S:f(z)=0\}=:M'\subsetneqq S$, i.e. $M'$ is an analytic subset of $S$, different from $S$, and so it is pluripolar. Therefore, $M$ is pluripolar. Using now \cite{JarPfl2011}, Corollary 3.2.12, gives $h_{M,S}^\ast\equiv 1$ on $S$ which contradicts the fact that $M$ is assumed to be locally pluriregular (i.e. $h_{M,S}^\ast|_M\equiv 0$).}.
\end{Remark}

Finally we state some simple facts.

\begin{Lemma}\label{lem1} Fix an $f\in\CO_s(\wdht X)$.  Let $B':=\{w\in B: f(\cdot,w)\not\equiv 0\}$. Then

1) $B'$ is open as a subset of $B$;

2) if $0\in A$ and  if $B\setminus B'$ contains a non-empty relatively open subset $U$ with $f(\cdot,w)\equiv 0$ on $D$ for all $w\in U$, then $B'=\varnothing$, i.e. $f(\cdot,w)\equiv 0$ for all $w\in B$.

3) if $0\in A$, then $B'$ is a dense subset of $B$.
\end{Lemma}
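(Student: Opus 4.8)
The plan is to deduce all three assertions from the two items of Remark~\ref{rem1} on locally pluriregular sets, together with the separate holomorphy contained in $f\in\CO_s(\wdht{\bold X})$ --- i.e.\ $f(\cdot,w)\in\CO(D)$ for every $w\in B$ and $f(a,\cdot)\in\CO(G)$ for every $a\in A$ --- and, in 3), the uniqueness of the holomorphic extension furnished by the cross theorem. If $f\equiv 0$ then $B'=\varnothing$, so 1) and 2) are trivially true while 3) would be false; accordingly, in 3) we tacitly assume $f\not\equiv 0$.

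For 1), fix $w_0\in B'$. Since $f(\cdot,w_0)\in\CO(D)$ is not identically zero and $A\subset D$ is locally pluriregular, Remark~\ref{rem1}$\,\beta$) forbids $f(\cdot,w_0)|_A\equiv 0$, so we may choose $a_0\in A$ with $f(a_0,w_0)\neq 0$. The function $f(a_0,\cdot)\in\CO(G)$ is continuous and nonzero at $w_0$, hence nonzero on some open neighbourhood $W\subset G$ of $w_0$; then $V:=W\cap B$ is a relative neighbourhood of $w_0$ in $B$, and for every $w\in V$ the slice $f(\cdot,w)\in\CO(D)$ is nonzero at $a_0\in D$, i.e.\ $w\in B'$. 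Thus $V\subset B'$ and $B'$ is relatively open in $B$. The one thing to watch here is to route the argument through a point of $A$, rather than to appeal to joint continuity of $f$, which is not available on the bare cross.

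For 2), assume $0\in A$ (so $A\neq\varnothing$) and that $B\setminus B'$ contains a nonempty relatively open subset $U$ of $B$; then $f(\cdot,w)\equiv 0$ on $D$ for every $w\in U$. Writing $U=V'\cap B$ with $V'\subset\CC^n$ open, Remark~\ref{rem1}$\,\alpha$) shows that $U$ is again locally pluriregular, hence a locally pluriregular subset of $G$. Fixing $a\in A$, the function $f(a,\cdot)\in\CO(G)$ vanishes at every $w\in U$ (because $f(\cdot,w)\equiv 0$ on $D\ni a$), so $f(a,\cdot)\equiv 0$ on $G$ by Remark~\ref{rem1}$\,\beta$). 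As $a\in A$ was arbitrary, $f$ vanishes on $A\times G$; in particular $f(\cdot,w)|_A\equiv 0$ for every $w\in B$, and a second application of Remark~\ref{rem1}$\,\beta$) (to $A\subset D$) gives $f(\cdot,w)\equiv 0$ on $D$ for every $w\in B$, i.e.\ $B'=\varnothing$.

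For 3), note that by 1) the set $B'$ is relatively open in $B$, so either it is dense in $B$ or $B\setminus B'$ has nonempty relative interior in $B$. In the second case, since $w\notin B'$ means precisely $f(\cdot,w)\equiv 0$ on $D$, statement 2) applies with $U$ equal to that interior and yields $B'=\varnothing$; but then, for each $a\in A$, the function $f(a,\cdot)\in\CO(G)$ vanishes on the locally pluriregular set $B\subset G$, so $f(a,\cdot)\equiv 0$ on $G$ by Remark~\ref{rem1}$\,\beta$), whence $f$ vanishes identically on $\bold X$ and therefore $\wdht f\equiv 0$ by uniqueness of the holomorphic extension in the cross theorem --- contrary to $f\not\equiv 0$. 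Hence $B'$ is dense in $B$. Throughout, ``open'' and ``dense'' are meant for the relative topology of $B$, which may have empty interior in $\CC^n$; this bookkeeping, together with the harmless assumption $f\not\equiv 0$ in 3), is essentially the only care the lemma requires.
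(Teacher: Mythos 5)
Your proof is correct, but it follows a genuinely different route from the paper's. For 1) and 2) you never invoke the cross-theorem extension $\wdht f$: you detect non-vanishing (resp.\ vanishing) of the slice $f(\cdot,w)\in\CO(D)$ already on the locally pluriregular set $A$ via Remark~\ref{rem1}~$\beta$), and then propagate it in the $w$-direction through the holomorphic functions $f(a,\cdot)\in\CO(G)$, $a\in A$ --- correctly avoiding any appeal to joint continuity on the bare cross. The paper instead proves 1) by continuity of $\wdht f(z_0,\cdot)$ at a point $(z_0,w_0)\in\bold X\subset\wdht{\bold X}$, and 2) by expanding $\wdht f$ in its Hartogs series around $\{0\}\times G$ and killing the coefficients $c_\alpha$ on $U$ via Remark~\ref{rem1}. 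Your argument is more elementary (no extension theorem, no Hartogs expansion) and does not actually use the normalization $0\in A$, only $A\neq\varnothing$; the paper's version has the merit of rehearsing exactly the Hartogs-coefficient technique that is reused in the proof of Theorem~\ref{eins}. Your caveat about 3) is also well taken: as stated, 3) fails when $f\equiv 0$ on $\bold X$ (then $B'=\varnothing$ while $B\neq\varnothing$), so one must either exclude that trivial case, as you do, or read the paper's "direct consequence of the statements before" with that tacit assumption --- which is harmless for the applications, since the theorems' proofs assume from the outset that $f(\cdot,w')\not\equiv 0$ for some $w'\in B$.
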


\begin{proof}
1)  Take a $w_0\in B'$ and a $z_0\in D$ such that $f(z_0,w_0)\neq 0$. Since $(z_0,w_0)\in\bold X\subset\wdht{\bold X}$ we have $\wdht f(z_0,w_0)=f(z_0,w_0)\neq 0$. Then there exists an open neighborhood $U\subset G$ of $w_0$ such that $\wdht f(z_0,w)\neq 0$, $w\in B\cap U$, i.e. $B\cap U\subset B'$.

2) By assumption, $f(\cdot,w)\equiv 0$ on $D$ in case that $w\in U$. Let $\wdht f$ be the holomorphic extension of $f$ to $\wdht{\bold X}$ according the cross theorem. Recall that $\{0\}\times G\subset\wdht{\bold X}$. Then $\wdht f$ can be written as its Hartogs series with center $0\in\CC^m$, i.e. $\wdht f(z,w)=\sum_{\alpha\in\NN_0^n}c_\alpha(w)z^\alpha$, where the $c_\alpha$ are holomorphic functions on $G$. Note that $c_\alpha(w)=0$ whenever $w\in U$. Hence by Remark \ref{rem1}  it follows that $c_\alpha\equiv 0$ on $G$; in particular, $f(\cdot,w)\equiv 0$ for all $w\in B$.

3) This claim is a direct consequence of the statements before.

\end{proof}

 With this information at hand we formulate our first result.

\begin{Theorem}\label{eins}
Let $D\subset\CC^m$, $G\subset\CC^n$ be domains and  $A\subset D$, resp. $B\subset G$,  locally pluriregular subsets of $D$, resp. of $G$. Moreover, $B$ is assumed to be closed as a subset of $\CC^n$.
Put $\bold X:=(A\times G)\cup (D\times B)\subset\CC^N$ and let $f:\bold X\too \CC$ be a given function with the following properties:

(a) for any point $a\in A$: $f(a,\cdot)\in\CO(G)$,

(b) for any $b\in B$: $f(\cdot,b)$ is the restriction of a polynomial $P_b\in\CC[z_1,\dots,z_m]$ \footnote{$P_b$ is allowed to be the trivial polynomial $P_b(z)\equiv 0$.}.

Then there exists a function $\wdht f\in\CO(\CC^m\times G)$ with $\wdht f|_{\bold X}=f$ and $\wdht f\in\CO(G)[z_1,\dots,z_m]$.

In particular, any $f(\cdot,b)$, $b\in B$, is the restriction of a polynomial whose coefficients are holomorphic on $G$.

\end{Theorem}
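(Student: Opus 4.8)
The plan is to deduce the statement from the cross theorem by analysing the Hartogs expansion of the extension in the $z$-variables; the hypothesis that $B$ is closed in $\CC^n$ will enter only through a Baire category argument that produces a \emph{uniform} bound on the degrees of the polynomials $P_b$.

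After a translation in $\CC^m$ we may assume $0\in A$. Since $f(a,\cdot)\in\CO(G)$ for $a\in A$ and $f(\cdot,b)=P_b|_D\in\CO(D)$ for $b\in B$, we have $f\in\CO_s(\bold X)$, so the cross theorem provides $\wdht f\in\CO(\wdht{\bold X})$ with $\wdht f|_{\bold X}=f$. As $0\in A$ we have $\{0\}\times G\subset A\times G\subset\bold X\subset\wdht{\bold X}$, and $\wdht{\bold X}$ is open; hence, exactly as in the proof of Lemma \ref{lem1}, $c_\alpha(w):=\frac1{\alpha!}\partial_z^\alpha\wdht f(0,w)$ defines, for every $\alpha\in\NN_0^m$, a function $c_\alpha\in\CO(G)$, and near every point of $\{0\}\times G$ one has the Hartogs expansion $\wdht f(z,w)=\sum_{\alpha\in\NN_0^m}c_\alpha(w)z^\alpha$.

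Now fix $b\in B$. Since $(z,b)\in D\times B\subset\bold X$ for all $z\in D$ and $\wdht f|_{\bold X}=f$, we get $\wdht f(\cdot,b)=P_b$ near $0$, so $c_\alpha(b)=\frac1{\alpha!}\partial^\alpha P_b(0)=0$ whenever $|\alpha|>\deg P_b$. Thus $B=\bigcup_{k\in\NN_0}B_k$, where $B_k:=\{b\in B: c_\alpha(b)=0\text{ for all }|\alpha|>k\}$ is relatively closed in $B$, being the common zero set in $B$ of the holomorphic functions $c_\alpha|_B$. Since $B$ is closed in $\CC^n$ it is a Baire space, so some $B_{k_0}$ has non-empty interior relative to $B$; that is, there is an open $V\subset G$ with $\varnothing\neq V\cap B\subset B_{k_0}$. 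By Remark \ref{rem1}$\alpha$) the set $U:=V\cap B$ is locally pluriregular in $G$, and $c_\alpha|_U\equiv 0$ for every $|\alpha|>k_0$; hence by Remark \ref{rem1}$\beta$), $c_\alpha\equiv 0$ on $G$ for all $|\alpha|>k_0$.

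Consequently $\wdht f(z,w)=\sum_{|\alpha|\le k_0}c_\alpha(w)z^\alpha$ near $\{0\}\times G$, and the right-hand side is an element of $\CO(G)[z_1,\dots,z_m]\subset\CO(\CC^m\times G)$; by connectedness of $\wdht{\bold X}$ and the identity theorem it agrees with $\wdht f$ on all of $\wdht{\bold X}$, hence with $f$ on $\bold X$ (and in particular $P_b(z)=\sum_{|\alpha|\le k_0}c_\alpha(b)z^\alpha$ for every $b\in B$). Undoing the translation preserves all these properties, which proves the theorem. The one genuine obstacle is the uniform degree bound $k_0$ for the family $(P_b)_{b\in B}$: this is precisely what the Baire argument — and thus the closedness of $B$ in $\CC^n$ — is needed for, the remaining steps being a routine combination of the cross theorem, the Hartogs expansion, and the identity principle on locally pluriregular sets.
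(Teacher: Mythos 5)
Your proof is correct, and its skeleton (cross theorem, Hartogs coefficients $c_\alpha\in\CO(G)$ along $\{0\}\times G$, Baire category on the closed set $B$ to get a uniform degree bound $k_0$, local pluriregularity of $V\cap B$ via Remark \ref{rem1} to force $c_\alpha\equiv 0$ for $|\alpha|>k_0$) is the same as the paper's; but you deviate in two genuine ways. First, the paper does not apply the cross theorem to $\bold X$ itself: since each $P_b$ is entire, it passes to the enlarged cross $\bold Y=(\CC^m\times B)\cup(A\times G)$ and uses $h^\ast_{A,\CC^m}\equiv 0$, so that $\wdht{\bold Y}=\CC^m\times G$ and the extension is holomorphic on $\CC^m\times G$ from the outset; you extend only to $\wdht{\bold X}$ and recover holomorphy on $\CC^m\times G$ a posteriori, because the truncated series $\sum_{|\alpha|\le k_0}c_\alpha(w)z^\alpha$ is globally defined, agrees with $\wdht f$ near $\{0\}\times G$, hence on the connected open set $\wdht{\bold X}$ by the identity theorem, hence with $f$ on $\bold X$. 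Second, your Baire decomposition uses the sets $B_k=\{b\in B:\ c_\alpha(b)=0 \text{ for } |\alpha|>k\}$, which are relatively closed (continuity of the $c_\alpha$) and automatically absorb the slices with $P_b\equiv 0$; this makes Lemma \ref{lem1} (in particular part 2, which the paper needs to exclude the degenerate piece $B\setminus B'$), the closures $\overline{M_{k_0}}$ of the degree level sets, and the preliminary reduction to the existence of some $w'$ with $f(\cdot,w')\not\equiv 0$ all unnecessary. Your route is thus a bit more economical; what the paper's route buys is the explicit intermediate fact that the cross theorem alone, applied over $\CC^m$, already yields an extension holomorphic on all of $\CC^m\times G$ before any degree bound is known.
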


Note that the $f$ in the theorem belongs to $\CO_s(\bold X)$.

\begin{proof} First, without loss of generality we may assume that $0\in A$ and that there is at least one point $w'\in B$ such that $f(\cdot,w')\not\equiv  0$ on $D$ (otherwise the statement in the theorem becomes trivial).

Note  that the function $\wdtl f:(A\times G)\cup (\CC^m\times B)\too \CC$ defined as
$$
\wdtl f(z,w):=\begin{cases} f(z,w), &\text{ if } (z,w)\in A\times G \\
P_w(z), &\text{ if } (z,w)\in\CC^m\times B
\end{cases}
$$
belongs to $\CO_s(\bold Y)$ with the cross $\bold Y:=(\CC^m\times B)\cup(A\times G)$, i.e., $\wdtl f(a,\cdot)\in\CO(G)$ for all $a\in A$ and $\wdtl f(\cdot,b)\in\CO(\CC^m)$ for all $b\in B$.

Therefore, using the general cross theorem (see Theorem 5.4.1 in \cite{JarPfl2011}) one concludes that there exists $\wdht f\in\CO(\wdht{\bold Y})$ with $\wdht f=\wdtl f$ on $\bold Y$,
where $$
\wdht{\bold Y}=\{(z,w)\in \CC^m\times G:h^\ast_{A,\CC^m}(z)+h^\ast_{B,G}(w)<1\}.
$$
In particular, $\wdht f=f$ on $\bold X$,

Observe that the relative extremal function $h^\ast_{A,\CC^m}=0$. Therefore, $\wdht f\in\CO(\CC^m\times G)$.

For the non-trivial polynomial $P_w$ denote by $k(w)\in\NN_0$\footnote{$\NN_0:=\NN\cup\{0\}$.}, $w\in B$, its degree. Moreover, let $B':=\{w\in B:f(\cdot,w)\not\equiv 0\}$.

Put $M_k:=\{w\in B:k(w)= k\}$. Since the closed set $B$ is the union of these countable many sets $M_k$ and $B\setminus B'$, it follows applying Baire's theorem and the second part of Lemma \ref{lem1}, that there is a $k_0\in\NN$ such that $\overline {M_{k_0}}$ contains an non-empty relatively open subset  $U\cap B$, $U\subset G$ open.

Recall that $\wdht f$ is holomorphic on the Hartogs domain $\CC^m\times G$. Therefore, $\wdht f$ can be written as its Hartogs series $\wdht f(z,w)=\sum_{\alpha\in\NN_0^m}c_\alpha(w) z^\alpha$ with $c_\alpha\in\CO(G)$. Then $c_\alpha(w)=0$ for all $|\alpha|>k_0$ and $w\in U\cap B$ because of $U\cap B
\subset\overline M_{k_0}$ and $c_\alpha$ is continuous on $U$.

Note that the set $B\cap U$ is locally pluriregular and the holomorphic functions $c_\alpha$, $|\alpha|>k_0$, vanishes on $U\cap B$. Hence, these $c_\alpha$ are identical zero on the whole of $G$..

Then
$\wdht f(z,w)=\sum_{|\alpha|\leq k_0}c_\alpha(w)z^\alpha$ on $\CC^m\times G$, which completes the proof of the theorem.
\end{proof}

\begin{Remark}
1) In case that $B$ contains a closed ball $K$, then $K$ is locally pluriregular (use the Oka Theorem \cite{Vlad}). Hence Theorem \ref{eins} is true and so Corollary 1 in \cite{Ivan2024} follows directly from Theorem \ref{eins}.

2) It is unclear to us whether Theorem \ref{eins} remains true if the additional assumption that $B$ is closed in $\CC^n$ is cancelled.
\end{Remark}

Similar as the main result in \cite{Ivan2024} even more is true  \footnote{See also \cite{Sic1962}, where Kronecker's criterion for the rationality of a function of one complex variable was already applied.}.

\begin{Theorem}\label{zwei}
Let $D\subset\CC^m$, $G\subset\CC^n$ be domains, $G$ a domain of holomorphy, and let $A\subset D$, resp. $B\subset G$, locally pluriregular. Moreover assume that $B$ is compact in $\CC^n$.
Put $$
\bold X:=(A\times G)\cup(D\times B)\subset \CC^m\times\CC^n=\CC^N.
$$
Let $f\in\CO_s(\bold X)$ satisfying in addition the following property:

 for any $w\in B$ there exist polynomials $P_w, Q_w\in\CC[z_1,\dots,z_m]$, $Q_w\neq 0$, such that $f(\cdot,w)Q_w=P_w$ on $D$,

Then there exist a domain of holomorphy $G'$ with
$B\subset G'\subset G$, a proper analytic subset $F$ of $G'$, and polynomials $P, Q\in\CO(G')[z_1,\dots,z_m]$,
such that $fQ=P$ on $\bold X$. Moreover, if $w\in B\setminus F$, then $Q(\cdot,w)$ is not the trivial polynomial.
In particular, the function $f(\cdot,w)$ is a rational function for all $w\in B\setminus F$ with holomorphic coefficients on $G'$.

Moreover, if $m=1$, then the theorem is even true if $B$ is only assumed to be closed in $\CC^n$, $G$ not necessarily a domain of holomorphy, and $G'=G$.
 \end{Theorem}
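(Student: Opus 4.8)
The plan is to run the extension mechanism of the proof of Theorem~\ref{eins} and then replace its ``bounded degree'' step by a Kronecker-type determinantal criterion for rationality of a power series in several variables, the passage from pointwise to holomorphic data being supplied again by Remark~\ref{rem1}. First I would reduce as in Theorem~\ref{eins}: after a translation assume $0\in A$, and assume $f(\cdot,w')\not\equiv0$ for some $w'\in B$ (otherwise, by the argument of Lemma~\ref{lem1}, all Hartogs coefficients of the extension vanish, so $f\equiv0$ on $\bold X$ and the statement is trivial with $Q\equiv1$, $P\equiv0$, $F=\varnothing$). The cross theorem gives $\wdht f\in\CO(\wdht{\bold X})$ with $\wdht f|_{\bold X}=f$. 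Now $h^\ast_{B,G}$ is plurisubharmonic on the connected set $G$, is $\le1$ there, and vanishes on the nonempty locally pluriregular set $B$; by the maximum principle it is therefore strictly $<1$ on all of $G$. Since also $h^\ast_{A,D}(0)=0$, this yields $\{0\}\times G\subset\wdht{\bold X}$, so in the local Hartogs expansion $\wdht f(z,w)=\sum_{\alpha\in\NN_0^m}c_\alpha(w)z^\alpha$ about $z=0$ the coefficients satisfy $c_\alpha\in\CO(G)$; moreover for each $w\in B$ the series $\sum_\alpha c_\alpha(w)z^\alpha$ is the Taylor series at $0$ of the rational function $f(\cdot,w)=P_w/Q_w$.

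The heart of the proof is a Baire argument. For $d\in\NN_0$ let $M_d(w)$ be the (infinite) matrix with rows indexed by $\{\alpha\in\NN_0^m:|\alpha|>d\}$ and columns by $\{\beta\in\NN_0^m:|\beta|\le d\}$ (there are $N_d:=\binom{m+d}{m}$ of them), with entry $c_{\alpha-\beta}(w)$, read as $0$ when $\beta\not\le\alpha$. A formal power series $\sum c_\alpha z^\alpha$ is of the form $P/Q$ with polynomials $P,Q$, $Q\neq0$, $\deg P,\deg Q\le d$, exactly when $M_d$ has a nontrivial kernel, i.e. when all $N_d\times N_d$ minors of $M_d$ vanish; indeed $QF=P$ is precisely the statement that all coefficients of $QF$ of degree $>d$ vanish, which says that the coefficient vector of $Q$ lies in $\ker M_d$. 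These minors are polynomials in the $c_\alpha(w)$, hence lie in $\CO(G)$. For each $w\in B$, rationality of $f(\cdot,w)$ gives a $d$ with all $N_d\times N_d$ minors of $M_d(w)$ equal to $0$, so $B=\bigcup_{d\ge0}\tilde E_d$ with $\tilde E_d:=\{w\in B:\text{all }N_d\times N_d\text{ minors of }M_d(w)\text{ vanish}\}$ closed in $B$ and increasing in $d$. As $B$ is closed in $\CC^n$ it is a Baire space, so some $\tilde E_{d_0}$ contains a nonempty relatively open set $U\cap B$ with $U\subset G$ open; by Remark~\ref{rem1}($\alpha$) the set $U\cap B$ is locally pluriregular, so the holomorphic minors of $M_{d_0}$, vanishing on it, vanish identically on $G$ by Remark~\ref{rem1}($\beta$). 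Hence for \emph{every} $w\in G$ the series $F_w:=\sum_\alpha c_\alpha(w)z^\alpha$ represents a rational function with numerator and denominator of degree $\le d_0$.

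It remains to pick $0\neq q(w)\in\ker M_{d_0}(w)$ holomorphically in $w$. The rank of $M_{d_0}(w)$ is $\le N_{d_0}-1$ everywhere on the connected set $G$; let $r$ be its generic value and fix $r$ rows and $r$ columns whose minor $\Delta$ is not identically $0$ on $G$. Adjoining one further column, the standard cofactor construction produces $q(w)=(q_\beta(w))_{|\beta|\le d_0}$ with entries among the minors of $M_{d_0}$, one of them equal to $\Delta$, such that $M_{d_0}(w)q(w)=0$ on the dense open set $\{w:\Delta(w)\neq0,\ \rank M_{d_0}(w)=r\}$, hence on all of $G$ by the identity theorem. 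Put $Q(z,w):=\sum_{|\beta|\le d_0}q_\beta(w)z^\beta$ and $P(z,w):=\sum_{|\gamma|\le d_0}\bigl(\sum_{\beta\le\gamma}q_\beta(w)c_{\gamma-\beta}(w)\bigr)z^\gamma$, so $P,Q\in\CO(G)[z_1,\dots,z_m]$ and $Q(\cdot,w)F_w=P(\cdot,w)$ for all $w\in G$. For $w\in B$ this gives $f(z,w)Q(z,w)=P(z,w)$ for $z$ near $0$, hence on $D\times B$ by the identity theorem in $z$ on the domain $D$; then for fixed $z\in A$ the function $f(z,\cdot)Q(z,\cdot)-P(z,\cdot)\in\CO(G)$ vanishes on the locally pluriregular set $B$ and so vanishes on $G$ by Remark~\ref{rem1}($\beta$), giving $fQ=P$ on all of $\bold X$. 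Taking $G':=G$ (a domain of holomorphy by hypothesis) and $F:=\{w\in G:q(w)=0\}$, which is contained in $\{w\in G:\Delta(w)=0\}$ and hence a proper analytic subset of $G'$, we conclude that $Q(\cdot,w)$ is a nontrivial polynomial and $f(\cdot,w)=P(\cdot,w)/Q(\cdot,w)$ is rational with coefficients in $\CO(G')$ for every $w\in B\setminus F$.

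When $m=1$ the matrices $M_d$ are Hankel matrices and the whole argument runs through word for word; inspecting it, the only facts used are that $B$ is locally pluriregular and closed in $\CC^n$ and that $G$ is a domain (connected), so one obtains the stated refinement with $G'=G$, $G$ arbitrary, and $B$ merely closed. (The cofactor construction of $q$ sidesteps any need for a Stein hypothesis on $G$; such a hypothesis would otherwise enter if one preferred to extract $\ker M_{d_0}$ via Cartan's Theorem~B.) I expect the two genuinely substantial points to be the several-variable Kronecker criterion in exactly the determinantal shape that makes the $\tilde E_d$ closed with union $B$, and the holomorphic extraction of $P$ and $Q$ together with the identification of the correct proper analytic exceptional set $F$; everything else is the cross theorem plus Remark~\ref{rem1}, essentially as in Theorem~\ref{eins}.
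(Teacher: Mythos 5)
Your argument is correct, but it follows a genuinely different route from the paper. The paper proves the case $m=1$ via the classical one-variable Kronecker criterion (shifted Hankel determinants $A_{s,k}$, the Hadamard--Borel identity to pass to the minimal order $r_0$, and Cramer's rule to produce $\wdtl b_j,\wdtl a_j\in\CO(G)$), and then handles $m\ge 2$ by induction: it applies the $m=1$ case in the $z_1$-variable over a polydisc cross, uses the compactness of $B$ to fix a relatively compact domain of holomorphy $G'\subset G$ with $h^\ast_{B,G}<M$ there, applies the inductive hypothesis to the Taylor coefficients $c_j(z',w)$ (rational in $z'$), and needs Gunning--Rossi (global equations for analytic sets in a domain of holomorphy) and Whitney's theorem to manufacture the exceptional set $\wdht F$. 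You replace all of this by a single several-variable Kronecker-type criterion: the matrix $M_d(w)$ with entries $c_{\alpha-\beta}(w)$ has nontrivial kernel iff $Q\,F_w=P$ with $\deg P,\deg Q\le d$, so its maximal minors are holomorphic functions of $w$ whose common zero sets give the closed exhaustion $B=\bigcup_d\tilde E_d$; one Baire argument plus Remark \ref{rem1} kills all these minors on $G$, and the cofactor construction on a maximal-rank minor yields $P,Q\in\CO(G)[z_1,\dots,z_m]$ and the proper analytic set $F\subset\{\Delta=0\}$ directly. This buys a lot: no induction on $m$, no Hadamard identity, no Gunning--Rossi or Whitney, and -- as you observe -- your proof only uses that $B$ is closed, $G$ a domain, and gives $G'=G$ for every $m$, so if written up carefully it actually strengthens Theorem \ref{zwei} and answers the questions raised in Remark (b) and (c) after it (and extends the final ``Moreover'' clause beyond $m=1$); by contrast, the paper's inductive proof genuinely uses compactness of $B$ and the Steinness of $G$, which is why it only reaches the weaker statement. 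Two small points to tighten: take $r$ to be the \emph{maximal} rank of $M_{d_0}$ on $G$, so that every $(r+1)\times(r+1)$ minor vanishes identically and $M_{d_0}(w)q(w)=0$ holds pointwise without appealing to the identity theorem (your density/identity-theorem phrasing works, but this is cleaner), and note the degenerate case $r=0$ (all entries of $M_{d_0}$ vanish identically, i.e. $\wdht f(z,w)=c_0(w)$), where one simply takes $Q\equiv 1$; also your appeal to the maximum principle to get $h^\ast_{B,G}<1$ on $G$, hence $\{0\}\times G\subset\wdht{\bold X}$ and $c_\alpha\in\CO(G)$, is exactly the fact the paper uses implicitly, so that step is sound.
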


Note that for $m=1$ the result \footnote{Observe that the result there has to be modified taking into account of an exceptional analytic subset in the parameter space.} in \cite{Ivan2024} is an immediate consequence of Theorem \ref{zwei}. Namely, take $A=D$ and $B$ a closed ball inside of $G$. Then $\bold X=(D\times G)\cup (D\times B)$, $G'= G$, and therefore $fQ=P$ with $P, Q\in\CO(G)[z_1]$ with the properties mentioned in Theorem \ref{zwei}.

However, it remains unclear how to get from Theorem \ref{zwei} directly the result in \cite{Ivan2024} in case $m>1$.

\begin{Remark}
a) At the moment it remains open what happens if $B$ is only assumed to be a locally pluriregular subset of $G$.

b) It is also open whether the neighborhood $G'$ of $B$ can be chosen to be the whole $G$.

c) It is also not clear whether the assumption that $G$ is a domain of holomorphy can be cancelled.

\end{Remark}

\begin{proof}
First some general remarks:

Without loss of generality we may assume that $0\in A$ and that there exists a point $\wdht w\in B$ such that $f(\cdot,\wdht w)\not\equiv 0$ (otherwise take $Q(z,w)=1$ and $P(z,w)=0$ whenever $w\in G$ and $z\in\CC^m$).


Since $f\in\CO_s(\bold X)$, there exists a function $\wdht f\in\CO(\wdht{\bold X})$, where
$$
\wdht{\bold X}=\{(z,w)\in D\times G: h^\ast_{A,D}(z)+h^\ast_{B,G}(w)<1\},
$$
such that $\wdht f|_{\bold X}=f$.

Note that $h^\ast_{A,D}|_A=0$. Thus $\{0\}\times G\subset\wdht{\bold X}$. Therefore, there is a continuous positive function $\eps:G\too(0,\infty)$ such that
$$
H:=\{(z,w)\in \CC^m\times G: \|z\|<\eps(w)\}\subset\wdht{\bold X}.
$$
Here $\|\cdot\|$ denotes the maximum-norm in $\CC^m$.

Hence, we may write $\wdht f$ as its Hartogs series on $H$, i.e. $\wdht f(z,w)=\sum_{\alpha}\wdht c_\alpha(w)z^\alpha$ with $\wdht c_\alpha\in\CO(G)$. Note that not all the $\wdht c_\alpha(\wdht w)$ vanish since otherwise (use the identity theorem) $f(\cdot,\wdht w)$ is identically zero on $D$.

Moreover, we may assume that for any $w\in B$ the polynomials $P_w, Q_w$ are relative prime. Note that $P_w/Q_w$ coincides with $f(\cdot,w)$ on $D$, i.e. it is holomorphic on $D$. Therefore, using \cite{RudSto}, $Q_w$ has no zeros in $D$.


\vskip 0.3cm

For the next step we assume that $m=1$:

Observe first that in case there is an $s_0\in\NN$ such that for all $s\geq s_0$ and all $w\in B$ one has $\wdht c_s(w):=\frac{1}{s!}\frac{\partial^s f}{\partial^sz_1}(0,w)=0$. Then
$f(\cdot,w)\in\CO(G)[z_1]$ or more precisely $f(z_1,w)=\sum_{s=0}^{s_0}\wdht c_s(w)z_1^s$ where the coefficients $\wdht c_s$ are holomorphic on $G$.


Next we follow the argument given in \cite{Ivan2024}. For simplicity, from now on we will write $z$ instead of $z_1$.

Let $P_w(z)=\sum_{j=0}^{k(w)-1}a_j(w)z^j$ and $Q_w(z)=\sum_{j=0}^{k(w)}b_j(w)z^j$, where $w\in B$ and $k(w)\in\NN$. Note that $k(w)$ (resp. $k(w)-1$) is not necessarily the degree of $Q_w$ (resp. $P_w$).
Observe that for a fixed $w\in B$ not all $b_j(w)$ are zero.


Recall that $f(\cdot,w)$, $w\in B$, is holomorphic on $D$ and that we assume that $0\in A$. Take $r>0$ such that $\overline\DD(0,r)\subset D$, where $\DD(0,r)$ denotes the open disc with center $0$ and radius $r$. Then $f(z,w)$ can be written on $\overline\DD(0,r)\times B$ as $f(z,w)=\sum_{j=0}^\infty c_j(w)z^j$. Recall that in addition
$\wdht f(z,w)=\sum_{j=0}^\infty \wdht c_j(w)z^j$ on $\overline\DD(0,\eps(w)/2)\times G$, where the $\wdht c_j$'s are holomorphic on $G$. Since $\wdht f(\cdot,w)=f(\cdot,w)$ for $w\in B$, it follows that $\wdht c_j(w)=c_j(w)$ for $w\in B$. Hence, $c_j$ is the restriction to $B$ of the holomorphic function $\wdht c_j\in\CO(G)$.

Recall from above that for a $w\in B$ such that $f(\cdot,w)\not\equiv 0$ on $D$ one knows that not all $c_j(w)$ vanish.

Let now $w\in B$. Then, comparing the coefficients in the equation $f(\cdot,w)Q_w=P_w$, we get the following equations with $k:=k(w)$:

$$ \begin{aligned}
a_0(w)&=c_0(w)b_0(w)\\
a_1(w)&=c_1(w)b_0(w)+c_0(w)b_1(w)\\
&\text{etc}\\
a_{k-1}(w)&=c_{k-1}(w)b_0(w)+c_{k-2}(w)b_1(w)+\cdot +c_0(w)b_{k-1}(w).
\end{aligned}
$$
Moreover,
$$
\begin{aligned}
0&=c_k(w) b_0(w)+c_{k-1}(w)b_1(w)+\cdots + c_0(w)b_k(w) \\
0&=c_{k+1}(w)b_0(w)+c_k(w)b_1(w)+\cdots + c_1(w)b_{k}(w) \\
&\text{etc}\\
0&=c_{k+s}(w)b_0(w)+c_{k+s-1}(w)b_1(w)+\cdots + c_{s}(w)b_k(w),
\end{aligned}
$$
where $s\in\NN_0$.

Recall the notion of the Hankel matrix for $w\in G$, i.e.
$$
H_{s,m}(w):=\left(\begin{matrix} \wdht c_s(w) & \wdht c_{s+1}(w) & \cdots & \wdht c_{s+m}(w)\\
\wdht c_{s+1}(w) & \wdht c_{s+2}(w) & \cdots & \wdht c_{s+1+m}(w)\\
\cdots\\
\wdht c_{s+m}(w) & \wdht c_{s+m+1}(w) & \cdots & \wdht c_{s+2m}(w)\end{matrix}\right),\quad  s, m\in\NN_0.
$$

Then, since the coefficients of $Q_w$ are not identically zero, one gets
$$
A_{s,k(w)}(w):=\det H_{s,k(w)}(w)=0, \quad w\in B, s\geq 0.
$$
Note that $k(w)$ depends on the point $w\in B$.

Now, for $\ell\in \NN$ put
$$
M_\ell:=\{w\in B: k(w)=\ell\}.
$$

Then $B$ is the countable union of the $M_\ell$'s. Hence applying Baire leads to an $\ell_0\in\NN$  such that $\overline{M_{\ell_0}}$  contains a non-empty relatively open subset of $B$, i.e. there is an open subset $U\subset\CC^n$ with $\varnothing\neq U\cap B\subset \overline{M_{\ell_0}}$.

 If $w\in M_{\ell_0}$, then $A_{s,\ell_0}(w)=0$. Note that $A_{s,\ell_0}$ is a holomorphic function on $G$. Therefore, $A_{s,\ell_0}(w)=0$ for all $w\in U\cap B$. Finally using that $U\cap B$ is again locally pluriregular yields that $A_{s,\ell_0}=0$ on the whole of $G$.

\emph {To summarize:} there is  an $\ell_0\in\NN$  such that for all $s\in\NN_0$ we have $A_{s,\ell_0}\equiv 0$ on $G$.

The rest of the proof for $m=1$ is based on a result by Kronecker (\cite{Kron1881}), see also Borel (\cite{Bor1894}), giving a criterion for the rationality  of a holomorphic function in one complex variable.

Recall that all the matrices $H_{s,\ell_0}(w)$, $w\in G$ and $s\in \NN_0$, are singular and that $\ell_0\in\NN$. Now choose $r_0\in\NN$ to be the smallest number such that all the matrices $H_{s,r_0}(w)$, $s\in\NN_0, w\in G$, are singular. Note that $r_0\leq \ell_0$.

First, let $r_0=1$:

If $w\in G$ with  $\wdht c_0(w)=0$, then $\wdht c_s(w)=0$ for all $s\geq 0$. Therefore, there exists a $w'\in B$ such that $\wdht c_0(w')\neq 0$, otherwise $f(\cdot,w)=0$ on $D$ for all $w\in B$.\footnote{Note that if $w\in B$ with $f(\cdot,w)\not\equiv 0$ on $D$, then $c_0(w)\neq 0$.} Let $F:=\{w\in G: \wdht c_0(w)=0\}$ which is a proper analytic subset of $G$. For $w\in G\setminus F$ put $\wdtl b_1(w):=\wdht c_1(w)/\wdht c_0(w)$ and $\wdtl b_0(w):=-1$. Then $\wdht c_0(w)\wdtl b_1(w)+\wdht c_1(w)\wdtl b_0(w)=0$ on $G\setminus F$. Because of $r_0=1$ it is easy to see that the vectors $(c_s(w),c_{s+1}(w)), (c_{s+1}(w),c_{s+2}(w))$ are linear dependent whenever $s\geq 0$. Consequently, the following equations
$$
\wdht c_s b_1+\wdht c_{s+1}b_0=0,\quad s\geq 0,
$$
hold on $G\setminus F$, where $b_j(w):=\wdtl b_j(w)\wdht c_0(w)$. Hence these equations are true on the whole of $G$. Note, that the $b_j$ are holomorphic on $G$. Finally, let $a_0(w):=\wdht c_0(w)b_0(w)$, $w\in G$.

Hence, $$
\wdht f(z,w) \Big( b_1(w)z+b_0(w)\Big)=a_0(w)
$$ as long as $w\in G$ and $\|z\|<\eps(w)$; in particular, for $(z,w)\in\bold X$.

Put $Q(z,w):=b_0(w)+b_1(w)z$ and $P(z,w):=a_0(w)$. 

Note that the polynomial $Q(z,w)$ is non-trivial, if $w\in B\setminus F$.
\vskip 0.2cm

Finally we assume $r_0\geq 2$:

Assume that for an arbitrary point $w\in G$  that $A_{0,r_0-1}(w)=0$. Using the following identity (see \cite{Had1892}, \cite{Bor1894})
$$
A_{s,r_0-1}(w)A_{s+2,r_0-1}(w)-A_{s+1,r_0-1}^2(w)=A_{s,r_0}(w)A_{s+2,r_0-2}(w) , \quad s= 0,\dots \quad (\ast)
$$
yields $A_{1,r_0-1}(w)=0$. Applying ($\ast$) again and again we get that $A_{s,r_0-1}(w)=0$ for all $s\geq 0$. Since $w$ was arbitrarily chosen this contradicts the minimality assumption for $r_0$.

Therefore there exists at least one point $w_0\in G$ with the property $A_{0,r_0-1}(w_0)\neq 0$. Put, as above, $F:=\{w\in G: A_{0,r_0-1}(w)=0\}$ and note that $F$ is a proper analytic set in $G$.

By assumption, if $w\in G\setminus F$, then the following system of equations has a solution
$$
\left(\begin{matrix} \wdht c_0(w) & \cdots & \wdht c_{r_0-1}(w)\\
&\cdots &\\
\wdht c_{r_0-1}(w) & \dots & \wdht c_{2r_0-2}(w) \end{matrix}\right)
\left(\begin{matrix} \wdht b_{r_0}(w)\\ \vdots \\ \wdht b_{1}(w)\end{matrix}\right)
=
\left(\begin{matrix} \wdht c_{r_0}(w)\\ \vdots \\ \wdht c_{2r_0-1}(w)\end{matrix}\right).
$$

We get that the $\wdht b_0:=-1,\dots, \wdht b_{r_0}$ solve all the equations
$$
\wdht c_{s+0}(w)\wdht b_{r_0}(w)+ \cdots + \wdht c_{s+r_0}(w)\wdht b_0(w)=0 , \quad w\in G\setminus F,\; s=0,\dots r_0-1.
$$
Applying Cramer's rule, we see that $\wdht b_j(w)=\wdtl b_j(w)/A_{0,r_0-1}(w)$, $j=1,\dots,r_0$, and $\wdtl b_0(w)=-A_{0,r_0-1}(w)$, where these $\wdtl b_j$'s are holomorphic functions on $G$. Hence,
$$
\wdht c_{0+s}(w) \wdtl b_{r_0}(w)+ \cdots + \wdht c_{s+r_0}(w) \wdtl b_0(w)=0,\quad w\in G\setminus F,\; s=0,\dots, r_0-1.\quad (\ast)
$$
Recall that $F$ is an analytic set. Therefore, the equations ($\ast$) are also true on the whole of $G$.

Recall the equations ($\ast$) and extend them by an additional one, namely
$$
A_{-1,r_0-1}(w)A_{1,r_0-1}(w)-A_{0,r_0-1}^2(w)=A_{-1,r_0}(w)A_{1,r_0-2}(w),
$$
where $A_{-1,m}(w)$ denotes the determinant of the following Hankel matrix:
$$
H_{-1,m}(w):=\left(\begin{matrix}  1 & \wdht c_{0}(w) & \cdots & \wdht c_{-1+m}(w)\\
\wdht c_{0}(w) & \wdht c_{1}(w) & \cdots & \wdht c_{m}(w)\\
\cdots\\
\wdht c_{-1+m}(w) & \wdht c_{m}(w) & \cdots & \wdht c_{-1+2m}(w)\end{matrix}\right).
$$

Note that the right hand side always vanishes. Since $A_{0,r_0-1}(w)\neq 0$, it follows that then $A_{1,r_0-1}(w)\neq 0$, and then $A_{2,r_0-1}(w)\neq 0$ etc., i.e. $A_{s,r_0-1}(w)\neq 0$ for all $s\geq 0$.

Then the last row of the matrix $H_{s,r_0}(w)$ is always linear dependent of its first rows. Therefore, we get step by step that all the equations
$$
\wdht c_{0+s}(w) \wdtl b_{r_0}(w)+ \cdots + \wdht c_{s+r_0}(w) \wdtl b_0(w)=0, \quad w\in G,\; s\geq 0 .\quad (\ast \ast)
$$
are correct.

Finally, put for $w\in G$:
$$\begin{aligned}
\wdtl a_0(w)&:= \wdtl b_0(w)\wdht c_0(w),\\
\wdtl a_1(w)&:= \wdtl b_1(w)\wdht c_0(w)+ \wdtl b_0(w)\wdht c_1(w),\\
\cdots\\
\wdtl a_{r_0-1}(w)&:= \wdtl b_{r_0-1}(w)\wdht c_0(w) + \wdtl b_{r_0-2}\wdht c_1(w)+\cdots + \wdtl b_0(w)\wdht c_{r_0-1}(w).
\end{aligned}
$$
Obviously, all the  $\wdtl a_j\in\CO(G)$.
Put
$$
\begin{aligned}
P(z,w)&:=P(w)(z):=\sum_{j=0}^{r_0-1} \wdtl a_j(w)z^j, w\in G,\\
Q(z,w)&:=Q(w)(z):=\sum_{j=0}^{r_0}\wdtl b_j(w)z^j, w\in G.
\end{aligned}
$$
Then we obtain
$$
\wdht f(z,w)Q(w)(z)=P(w)(z),\quad w\in G, \; |z|<\eps(w).
$$
In particular, $f(z,w)Q(z,w)=P(z,w)$ if $w\in B$, $|z|<\eps(w)$. Using the identity theorem one has even more, namely, $fQ=P$ on $D\times B$.
Moreover, if $w\in B\setminus F$, then $Q(\cdot,w)$ is a non trivial polynomial. Therefore, if $w\in B\setminus F$, then $f(\cdot,w)$ is a rational function with coefficients holomorphic on $G$.

Hence the case  $m=1$ has been proved.
\vskip 0.2cm
For further use we emphasize that {\it all the coefficients of $P$ and $Q$ are given as certain polynomials in finitely coefficients of the power series of $\wdht f(\cdot,w)$ } via Cramer's rule.
\vskip 0.4cm

Now, let $m\geq 2$ and assume that Theorem \ref{zwei} is true for the case $m-1$. Again, we may  assume that $0\in A$ and that$f(\cdot,w)\not\equiv 0$ for some $w\in B$..

By assumption, for any $w\in B$ there exist polynomials $P_w, Q_w\in\CC[z_1,\dots,z_m]$, $Q_w$ not identically zero, such that for all $z\in D$ the following representation $f(z,w)Q_w(z)=P_w(z)$ holds.
Moreover, we may assume that both polynomials are relatively prime. Following \cite{RudSto}, then $Q_w(z) \neq 0$ for all $z\in D$.

Let $z'\in\CC^{m-1}$ such that $D_{z'}:=\{z_1\in\CC:(z_1,z')\in D\}\neq \varnothing$. Note that, in general, $D_{z'}$ is not connected. Then, for $w\in B$, we have  $f(\cdot,z',w)Q_w(\cdot,z')=P_w(\cdot,z')$ on $D_{z'}$.
Note that $P_w(\cdot,z')$ and $Q_w(\cdot,z')$ are polynomials in $z_1$ and $Q_w(\cdot,z')$ is nowhere zero on $D_{z'}$.

The next step consists in defining the corresponding cross for $\wdht f$ to apply the situation for $m=1$:

Fix a relatively compact subdomain $G'$ of $G$ which contains $B$. Obviously, $G'$ may be chosen as a domain of holomorphy. Then there exists $M<1$ such that $h^\ast_{B,G}<M$ on $G'$.
Put $D_M:=\{z\in D: h^\ast_{A,D}(z)<1-M\}$. Note that $A\subset D_M$ and $D_M\times G'\subset D\times G$.

Note that $D_M$ is open. Thus there exists a positive number $\rho$ such that $\PP_m(\rho)\subset D_M$, where $\PP_{m}(\rho):=\{z\in\CC^{m}:\|z\|<\rho\}=\PP_1(\rho)\times\PP_{m-1}(\rho)$ is the polycylinder with
center $0$ and radius $\rho$. Choose $\rho'<\rho$ near $\rho$.  Put
$$
\bold Y:=\big(\PP_1(\rho)\times(\PP_{m-1}(\rho)\times G')\big)\cup \big(\PP_1(\rho)\times(\overline{\PP_{m-1}(\rho')}\times B)\big).
$$
Then $\wdht f|_{\bold Y} \in\CO_s(\bold Y)$ and for any point $(z',w)\in \overline{\PP_{m-1}(\rho')}\times B$ there are polynomials $P_w(\cdot,z')$ and $Q_w(\cdot,z')$ in $z_1$, the last without zeros on $\PP_1(\rho)$ such that
$$
\wdht f(\cdot,z',w)Q_w(\cdot,z')=P_w(\cdot,z') \quad\text{on}\quad \PP_1(\rho).
 $$
 Two remarks are in order, namely that $\PP_1(\rho)$ is locally pluriregular in $\PP_1(\rho)$ and $\overline{\PP_{m-1}(\rho')}\times B$ is compact and locally pluriregular in $\PP_{m-1}(\rho)\times G$.

Summarizing: $\wdht f$ and $\bold Y$ satisfy the assumption for the case $m=1$. Therefore, there exist polynomials $P, Q\in\CO(\PP_{m-1}(\rho)\times G')[z_1]$,  such that
$$
\wdht f Q= P \quad\text{ on }\quad \bold Y. \quad (\ast\ast\ast)
$$
Moreover, there exists a proper analytic subset $F\subset \PP_{m-1}(\rho)\times G'$ such that $Q(\cdot,z',w)$ is not the zero polynomial in $z_1$, as long as $(z',w)\in \big(\overline{ \PP_{m-1}(\rho')}\times B\big)\setminus F$.

Let $P$ and $Q$ be given as
$$
P(z',w)(z_1):=\sum_{j=0}^{r-1}\wdtl a_j(z',w)z_1^j\quad \text{and }\quad Q(z',w)(z_1):=\sum_{j=0}^{r}\wdtl b_j(z',w)z_1^j
$$
 for a certain $r\in\NN$. Recall that the coefficients $\wdtl a_j$ and $\wdtl b_j$ are holomorphic functions on
$\PP_{m-1}(\rho)\times G'$. In case that $(z',w)\in\big(\PP_{m-1}(\rho)\times G'\big)\setminus F$, then there exists an index $j_{z',w}\in\{0,\dots,r\}$ such that $b_{j_{z',w}}(z',w)\neq 0$.

Moreover, these coefficients are polynomials in finitely many of the coefficients of the power series in $z_1$ of $\wdht f(\cdot,z',w)$, i.e. in terms as $c_j(z',w):=\frac{1}{j!}\frac{\partial^j f}{\partial z_1^j}(0,z',w)$.

Observe that the $c_j$ are holomorphic in $\PP_{m-1}(\rho)\times G'$.

Note that by assumption $f(\cdot,w)$ is a rational function in $z$ (recall that $f(z,w)Q_w(z)=P_w(z)$ and that $Q_w$ is without zeros on $\PP_m(\rho)$). Taking the $z_1$-derivatives of $f(z_1,z',w)=P_w(z_1,z')/Q_w(z_1,z')$ at $z_1=0$ gives that $c_j(z',w)=P_{w,j}(z')/Q_{w,j}(z')$ with polynomials $P_{w,j}, Q_{w,j}\in\CC[z']$ and $Q_{w,j}$ without zeros on $\PP_{m-1}(\rho)$.

Then the $N$ coefficients $c_j$ , $j=0,\dots,N$ together with the cross $(\PP_{m-1}(\rho)\times G')\cup(\PP_{m-1}(\rho)\times B)$ satisfy the induction assumption for $m-1$. Thus there exist $N$ domains of holomorphy $G_{j+1}'\subset G_{j}'\subset G'$ with $B\subset G_N'$, proper analytic sets $F_j\subset G_j'$, polynomials $p_j, q_j\in \CO(G_j')[z']$ with the following properties:
$$
\begin{aligned}
c_j(z',w)&q_j(z',w)=p_j(z',w) \text{ if }w\in B, \\
&q_j(\cdot,w) \quad\text{ is not the trivial polynomial if } w\in G_j'\setminus F_j.
\end{aligned}
$$

Put $G'':=G_N'$ and $\wdht F_1:=\bigcup_{s=0}^N (F_j\cap G'')$. Then $\wdht F_1$ is a proper analytic subset of $G''$.

Recall that $G''$ is a domain of holomorphy. Then the analytic set $F\subset\PP_{m-1}(\rho)\times G''$ is given as the simultaneous zero set of a family of holomorphic functions $h_j\in\CO(\PP_{m-1}(\rho)\times G'')$, $j\in J$
(see \cite{Gun-Ros 1965}, Chap. VIII, 18.Theorem). Put $\wdht F_2:=\{w\in G'': h_j(z,w)=0 \text{ for all }z\in \PP_{m-1}(\rho), j\in J\}$. Then $\wdht F_2$ is a proper analytic subset of $G''$ (see \cite{Whit1972}, Theorem 9E).
Finally, put $\wdht F:=\wdht F_1\cup \wdht F_2$. Then $\wdht F$ is a proper analytic subset of $G''$.

Take now a point $w_0\in B\setminus \wdht F$. Then for any $j=1,\dots,N$ there exists an proper analytic subset $E_j\subset \CC^{m-1}$ such that $q_j(\cdot,w_0)$ has no zeros on $\CC^{m-1}\setminus E_j$.
Put $\wdht E:=\cup_{j=1}^N E_j$. Then $q_j(z',w_0)\neq 0$ whenever $z'\in\CC^{m-1}\setminus\wdht E$ and $j=1,\dots,N$.

Observe that $\wdht E$ is a proper analytic subset on $\CC^{m-1}$. In particular, it is closed without inner points.

Then there are two possibilities: either there is a point $z'_0\in \overline\PP_{m-1}(\rho')\setminus \wdht E$ such that $(z'_0,w_0)\in (\overline\PP_{m-1}(\rho')\times B)\setminus F$ or
$(\overline\PP_{m-1}(\rho')\setminus \wdht E)\times\{w_0\}\subset F$.

In the first case we know that  $\wdtl b_{j_0}(z'_0,w_0)\neq 0$ for some $j_0\in\{1,\dots,N\}$ and $q_s(z'_0.w_0)\neq 0$ for all $s=0,\dots,N$.

Recall that $\wdtl b_{j}$'s (resp. $\wdtl a_{j}$'s) are given as  polynomials in the $p_k/q_k$'s, i.e.
$$
\begin{aligned}
\wdtl b_{j}(z',w)=&\sum_{k=0}^{N_{j}} y_{j,k}\Big(\prod_{s=0}^N\big(\frac{p_s(z',w)}{q_s(z',w)}\big)^{n_{j,k,s}}\Big), \\
&\wdtl a_{j}(z',w)=\sum_{k=0}^{M_{j}} x_{j,k}\Big(\prod_{s=0}^N\big(\frac{p_s(z',w)}{q_s(z',w)}\big)^{m_{j,k,s}}\Big),
\end{aligned}
$$
where the $x_{j,k}$, $y_{j,k}$ are complex numbers and $m_{k,s}, n_{j,k,s}\in\NN_0$ and $(z',w)\in\PP_{m-1}\times G''$.

Inserting this into the polynomials $P, Q$ we  get
$$
\begin{aligned}
Q(z',w)(z_1)=\sum_{j=0}^r\Big(\sum_{k=1}^{N_j} y_{j,k}\Big(\prod_{s=0}^N\big(\frac{p_s(z',w)}{q_s(z',w)}\big)^{n_{j,k,s}} \Big)z_1^j,\\
P(z',w)(z_1)=\sum_{j=0}^{r-1}\Big(\sum_{k=1}^{M_j} x_{j,k}\Big(\prod_{s=0}^N\big(\frac{p_s(z',w)}{q_s(z',w)}\big)^{m_{j,k,s}} \Big)z_1^j.
\end{aligned}
$$
Multiplying the equation $(\ast\ast\ast)$ with $\prod_{s=0}^N q_s(z',w)^{K_s}$, where $\wdht N_s:=\max\{n_{j,k,s}:j=0,\dots,r, k=0,\dots, M_j\}$, $\wdht M_s:=\max\{m_{j,k,s}:j=0,\dots,r-1, k=0,\dots, N_j\}$,and $K_s:= \max\{\wdht N_s,\wdht M_s\}$, yields

$$
\begin{aligned}
f(z_1,z',w) \sum_{j=0}^r\Big(\sum_{k=1}^{N_j}& y_{j,k}\big(\prod_{s=0}^N p_s(z',w)^{n_{j,k,s}} q_s(z',w)^{K_s-n_{j,k,s}}\big)\Big)z_1^j \\
&=\sum_{j=0}^{r-1}\Big(\sum_{k=1}^{M_j} x_{j,k}\big(\prod_{s=0}^N p_k(z',w)^{m_{j,k}}q_k(z',w)^{K_s-m_{{j,k,s}}}\big) \Big)z_1^j.
\end{aligned}
$$
Therefore, one has the following equation for $f$, namely  $f(z_1,z',w)\wdht Q(z_1,z',w)=\wdht P(z_1,z',w)$, where $\wdht Q(z_1,z',w), \wdht P(z_1,z',w)\in\CO(G')[z_1,z']$.

Recall that
$$
\begin{aligned}
0\neq \wdtl b_{j_0}(z'_0,w_0)&\prod_{s=0}^N q_s(z'_0,w_0)^{K_s} =\\ &\sum_{k=1}^{N_{j_0}} y_{j_0,k}\big(\prod_{s=0}^N p_s(z'_0,w_0)^{n_{j_0,k,s}} q_s(z'_0,w_0)^{K_s-n_{j_0,k,s}}\big).
\end{aligned}
$$
Hence $\wdht Q(\cdot,w_0)$ is not the zero polynomial or $f(\cdot,w_0)$ is the restriction of a rational function whenever $w_0\in B\setminus \wdht F$. So the first case is completely done.

Recall the second case which is left so far, i.e. from now on we assume
$$
w_0\in B\setminus\wdht F \quad\text{ and }\quad(\overline\PP_{m-1}(\rho')\setminus \wdht E)\times\{w_0\}\subset F.
$$
Then even more is true, namely $\overline\PP_{m-1}(\rho')\times\{w_0\}\subset F$. And then $\PP_{m-1}(\rho)\times\{w_0\}\subset F$ (use the identity theorem for the functions $h_j(\cdot,w_0)$); a contradiction to the assumption that $w_0\notin \wdht F_2$.

Hence the theorem is proven.
\end{proof}

{\bf Acknowledgement.} I deeply thank Sergej Melikhov for answering my questions connected with his paper which finally enabled me to write this note. Also I like to thank Arkadiusz Lewandowski for  careful readings of some of the versions of this note.

\bibliographystyle{amsplain}

\end{document}